\documentclass[11pt]{amsart}

\usepackage{amsmath,amssymb,amsthm,mathtools}
\usepackage{enumitem}
\usepackage{hyperref}
\usepackage[margin=1.15in]{geometry}

\newtheorem{theorem}{Theorem}
\newtheorem{lemma}[theorem]{Lemma}

\newcommand{\E}{\mathbb E}

\title{Sharp Metric $X_p$ Inequalities via Martingales}

\author{Nika Areshidze}
\address{
Department of Mathematics,
University of California, Irvine,
CA 92697, USA
}
\email{nareshid@uci.edu}

\subjclass[2020]{Primary 60G42; Secondary 46B09, 46B80}

\keywords{metric $X_p$ inequalities, Hamming cube, Rademacher chaos,
martingale inequalities, random-reveal martingales}

\begin{document}

\begin{abstract}
We prove a sharp higher-order Rademacher-chaos inequality on the Hamming cube using random-reveal martingales. As a consequence, we obtain the sharp metric $X_p$ inequality for $L^p$, resolving an open question posed by Naor.
\end{abstract}

\maketitle

\section{Introduction}

Metric $X_p$ inequalities were introduced by Naor and Schechtman
\cite{NaorSchechtman} in their study of the nonlinear geometry of
$L^p$ spaces. A related first-order inequality for Rademacher chaos
was originally proved by Johnson, Maurey, Schechtman and Tzafriri
\cite{JMST}. Johnson, Schechtman and Zinn \cite{JSZ} subsequently
obtained its sharp form, with optimal dependence of order $p/\log p$.

In \cite{Naor}, Naor extended this first-order inequality to
Rademacher chaos of arbitrary degree. For a mean-zero function on the
Hamming cube, the resulting estimate controls averages of conditional
expectations over subsets of coordinates in terms of the discrete
derivatives of the function and its $L^p$ norm. This higher-order
inequality was a central ingredient in Naor's proof of the metric
$X_p$ inequality for $L^p$ with the sharp scaling parameter.

The dependence on $p$ obtained by this argument, however, was not
optimal. Naor's proof yields an upper bound of order
$p^4/\log p$, whereas the first-order inequality shows that order
$p/\log p$ is necessary. In \cite[Remark~5]{Naor}, Naor observed that,
although a more careful implementation of his approach might somewhat
improve the dependence on $p$, obtaining the sharp order appeared to
require a new idea, and he left the sharp dependence on $p$ as an open problem.

At the level of the underlying Rademacher-chaos
inequality, the corresponding question was left open in
\cite[Remark~7]{Naor}.

In this paper we obtain the sharp $p/\log p$ dependence in the
higher-order Rademacher-chaos inequality, thereby closing the gap
between the first-order estimate and Naor's higher-order bound. In
particular, we prove the following.

\begin{theorem}[Sharp Rademacher-chaos inequality]\label{thm:chaos}
For every $p\ge2$, every $n\in\mathbb N$, every $k\in[n]$, and every
mean-zero function $h:\Omega_n\to\mathbb R$,
$$
\left(
\frac{1}{\binom{n}{k}}
\sum_{\substack{S\subseteq[n]\\ |S|=k}}
\|E_{[n]\setminus S}h\|_p^p
\right)^{1/p}
\lesssim
\frac{p}{\log p}
\left(
\frac{k}{n}\sum_{j=1}^n\|\partial_j h\|_p^p
+
\left(\frac{k}{n}\right)^{p/2}\|h\|_p^p
\right)^{1/p}.
$$
Moreover, the order $p/\log p$ is sharp up to a universal
multiplicative constant.
\end{theorem}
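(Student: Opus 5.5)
We will prove the upper bound with a \emph{random-reveal martingale}. Fix $h$ with mean zero. Let $\sigma$ be a uniformly random ordering of $[n]$, independent of $\varepsilon\in\Omega_n$. Let $S$ be the set of the first $k$ revealed coordinates, so that $S$ is a uniform $k$-subset. The left-hand side is then $\E_\sigma\|E_{[n]\setminus S}h\|_p^p$. For each ordering we have a Doob martingale
$$
M_t=E_{[n]\setminus\{\sigma(1),\dots,\sigma(t)\}}h,\qquad 0\le t\le k,
$$
starting at $M_0=\E h=0$, with increments
$$
d_t=M_t-M_{t-1}=\varepsilon_{\sigma(t)}\,E_{[n]\setminus\{\sigma(1),\dots,\sigma(t)\}}\,\partial_{\sigma(t)}h,
$$
up to the normalization of $\partial_j$. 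We regard $(\sigma,\varepsilon)$ jointly as a filtration in which each step reveals both which coordinate is chosen and its sign. The point is that conditionally on the past, the index $\sigma(t)$ is uniform over the $n-t+1$ unrevealed coordinates. This averaging over the index is what produces the factor $k/n$.

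The main tool will be the sharp Burkholder--Rosenthal inequality with constant of order $p/\log p$, due to Hitczenko and Johnson--Schechtman--Zinn. For any martingale,
$$
\|M_k\|_p\lesssim\frac{p}{\log p}\Bigl(\bigl\|(\textstyle\sum_t \E_{t-1}d_t^2)^{1/2}\bigr\|_p+\bigl(\textstyle\sum_t\E|d_t|^p\bigr)^{1/p}\Bigr).
$$
We bound the two terms separately.

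\emph{Diagonal term.} Since $|d_t|\le |E\,\partial_{\sigma(t)}h|$ and conditional expectation contracts $L^p$, we get $\E|d_t|^p\le \E_\sigma\|\partial_{\sigma(t)}h\|_p^p=\frac1n\sum_j\|\partial_jh\|_p^p$. Summing over $t\le k$ gives exactly $\frac kn\sum_j\|\partial_jh\|_p^p$.

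\emph{Square function term.} Averaging over the next index gives
$$
\E_{t-1}d_t^2=\frac{1}{n-t+1}\sum_{j\notin R_{t-1}}\bigl(E_{R_{t-1}^c\setminus\{j\}}\partial_jh\bigr)^2,
$$
where $R_{t-1}$ denotes the set of coordinates revealed before step $t$. For $t\le k\le n/2$ this is at most $\frac2n\sum_j (E_{\cdots}\partial_jh)^2$. We would then bound $\sum_j(\partial_j h)^2$-type quantities after conditional expectation by conditional variances of $h$: the quantity $\frac1{n-t+1}\sum_{j\notin R}(E_{R^c\setminus j}\partial_j h)^2$ equals $\frac1{n-t+1}\sum_{j\notin R}\E_{j}[(E_{R^c\setminus j}h-E_{R^c}h)^2]$. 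By an Efron--Stein/level-one argument this is dominated by a conditional variance of $h$ given the revealed coordinates. Hence the square function is $\lesssim \sqrt{k/n}$ times a maximal-type square function of $h$, whose $L^p$ norm is at most $C\|h\|_p$. This is the step where we must be careful not to lose powers of $p$. The plan is to avoid a second Burkholder application, which would cost a further factor $\sqrt p$, and to use only the convexity bound
$$
\Bigl\|\tfrac1k\textstyle\sum_{t\le k} V_t\Bigr\|_{p/2}\le\max_t\|V_t\|_{p/2}
$$
together with $\|V_t\|_{p/2}\lesssim\|h\|_p^2$, where $V_t$ is the conditional variance, via Jensen and the contraction property of conditional expectation. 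Since $V_t$ involves only a single conditional variance, it is bounded with an absolute constant, so this yields $(k/n)^{1/2}\|h\|_p$. The case $k>n/2$ is trivial: there $(k/n)^{p/2}\|h\|_p^p\gtrsim 2^{-p/2}\|h\|_p^p$, and the harmless loss is absorbed by enlarging $k$ via monotonicity in $k$ of the left side (by Jensen).

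I expect the main obstacle to be the square-function bound. The inequality $\frac1{m}\sum_{j\notin R}(E_{R^c\setminus j}\partial_jh)^2\lesssim \E_{R^c}[(h-E_{R^c}h)^2]$ is true only in an averaged sense, and the $1/m$ factor must combine with $k$ steps to give $k/n$ rather than $1$. If a pointwise bound fails, the fallback is to replace $\E_{t-1}d_t^2$ by $d_t^2$ using the alternative Burkholder form (constant $\sim p$ rather than $p/\log p$ for the square function). Alternatively, we would use the decoupled Rosenthal form of Hitczenko for conditionally symmetric increments, which holds here because of the sign $\varepsilon_{\sigma(t)}$, and that form keeps $p/\log p$.

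\emph{Sharpness.} For sharpness we take $k=1$ and $h=\frac1{\sqrt n}\sum_i\varepsilon_i$-type examples, or the Johnson--Schechtman--Zinn extremizers for sums of independent variables embedded as functions of disjoint coordinate blocks. With $k=1$ the inequality reduces to the first-order inequality, whose optimal constant is known to be of order $p/\log p$.
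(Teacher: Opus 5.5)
Your upper-bound argument is essentially the paper's proof. It uses the same random-reveal martingale and Hitczenko's $p/\log p$ Rosenthal inequality. The square-function term is handled the same way, by the bound $\mathbb E[d_t^2\mid\mathcal F_{t-1}]\le\frac{1}{n-t+1}\mathbb E[h^2\mid\mathcal F_{t-1}]$ followed by the triangle inequality in $L^{p/2}$. The increments are handled by conditional Jensen and the uniformity of $\sigma(t)$, and $k>n/2$ by the contraction bound.

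Your worry that the level-one bound holds ``only in an averaged sense'' is unfounded. On each atom of $\mathcal F_{t-1}$ the unrevealed $\varepsilon_j$ are orthonormal, so Bessel's inequality gives $\sum_{j\notin R_{t-1}}\mathbb E[\varepsilon_j h\mid\mathcal F_{t-1}]^2\le\mathbb E[h^2\mid\mathcal F_{t-1}]$ pointwise. This is the reverse direction of Efron--Stein, not Efron--Stein itself. None of your fallbacks is needed.

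\textbf{The gap is the sharpness claim.} Taking $k=1$ cannot detect any growth in $p$. For mean-zero $h$ one has $E_{[n]\setminus\{j\}}h=\hat h(\{j\})\varepsilon_j$ with $\hat h(\{j\})=\mathbb E[\varepsilon_j D_jh]$. Hence the left side at $k=1$ equals $\bigl(\frac1n\sum_j|\hat h(\{j\})|^p\bigr)^{1/p}\le\frac12\bigl(\frac1n\sum_j\|\partial_jh\|_p^p\bigr)^{1/p}$, so the inequality holds there with constant $\frac12$. More generally, $\|M_k\|_p\le\sum_{t\le k}\|d_t\|_p$ combined with your diagonal bound gives a constant $O(k)$. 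So any extremal example needs $k\gtrsim p/\log p$. Your example $h=n^{-1/2}\sum_i\varepsilon_i$ gives a bounded ratio for every $k$, since $\|\sum_{i\le m}\varepsilon_i\|_p\approx\min(m,\sqrt{pm})$. The JSZ extremizers ``embedded in disjoint blocks'' are not worked out, and at $k=1$ they cannot help either.

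In the paper, ``first-order'' means degree-one chaos: linear $h=\sum_j a_j\varepsilon_j$ with $k$ arbitrary. Then the left side is a Rademacher sum over a random $k$-subset, essentially $\|\sum_j\delta_ja_j\varepsilon_j\|_p$ with selectors of mean $k/n$. The JSZ lower bound is a Poisson phenomenon that requires sparse coefficients.

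Concretely, take $h=\sum_{j\in B}\varepsilon_j$ with $k|B|/n=1/p$ and $k,n$ large.
\begin{itemize}
\item The right-hand bracket is $O(1)$: $\frac kn\sum_j\|\partial_jh\|_p^p=2^p/p$, and $\sqrt{k/n}\,\|h\|_p\le\sqrt{pk|B|/n}=1$.
\item $|S\cap B|$ is approximately Poisson$(1/p)$. The event that $|S\cap B|=m\approx p/\log p$ with all these signs equal has probability about $(2p)^{-m}/m!$.
\item On that event $|E_{[n]\setminus S}h|=m$, so the $p$-th power of the left side is at least about $m^p(2p)^{-m}/m!$. This forces the left side to be $\gtrsim p/\log p$.
\end{itemize}
This linear, sparse-support, large-$k$ example is what ``already in the first-order case'' refers to. Your sharpness paragraph should be replaced by it.
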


The proof takes a different route from the one in \cite{Naor}. We
reveal the coordinates of the Hamming cube in a uniformly random
order, thereby realizing the averaging over $k$-element subsets as a
random-reveal martingale. Hitczenko's sharp martingale Rosenthal
inequality \cite{Hitczenko} can then be applied directly, producing
the optimal $p/\log p$ dependence.

Finally, combining Theorem~\ref{thm:chaos} with Naor's reduction from
the Rademacher-chaos inequality to the metric $X_p$ inequality yields
the corresponding sharp metric estimate for $L^p$. In particular, we
obtain the metric $X_p$ inequality with the sharp scaling parameter
and optimal dependence $p/\log p$, resolving the question raised in
\cite[Remark~5]{Naor}. The precise statement appears in
Theorem~\ref{thm:metric}.

The remainder of the paper is organized as follows. Section~2 contains
the notation and preliminaries. In Section~3 we prove
Theorem~\ref{thm:chaos}. Section~4 applies Naor's reduction to deduce
the sharp metric $X_p$ inequality.

\section{Preliminaries}

\subsection{The Hamming cube and conditional expectations}

For $n\in\mathbb N$, write $[n]=\{1,\ldots,n\}$
and let $\Omega_n=\{-1,1\}^n$ be equipped with the uniform probability
measure. Thus, for every $f:\Omega_n\to\mathbb R$,
$$
\mathbb Ef
=
\frac{1}{2^n}\sum_{x\in\Omega_n}f(x),
$$
and, for $1\le p<\infty$,
$$
\|f\|_p
=
\left(
\frac{1}{2^n}\sum_{x\in\Omega_n}|f(x)|^p
\right)^{1/p}.
$$

Throughout the paper, $A\lesssim B$ means that there exists a universal
constant $C>0$ such that $A\le CB$.

For $x=(x_1,\ldots,x_n)\in\Omega_n$ and $j\in[n]$, let $x^{(j)}$
denote the point obtained from $x$ by flipping its $j$th coordinate:
$$
x^{(j)}
=
(x_1,\ldots,x_{j-1},-x_j,x_{j+1},\ldots,x_n).
$$
For $f:\Omega_n\to\mathbb R$, define
$$
\partial_j f(x)=f(x)-f(x^{(j)}),
$$
and
$$
D_jf(x)
=
\frac12\partial_jf(x)
=
\frac{f(x)-f(x^{(j)})}{2}.
$$

Let $e_1,\ldots,e_n$ denote the standard basis of $\mathbb R^n$. For
$\varepsilon=(\varepsilon_1,\ldots,\varepsilon_n)\in\Omega_n$ and
$S\subseteq[n]$, write
$$
\varepsilon_S=\sum_{j\in S}\varepsilon_j e_j.
$$
For $h:\Omega_n\to\mathbb R$, define
$$
E_Sh(\varepsilon)
=
\frac{1}{2^n}
\sum_{\delta\in\Omega_n}
h\bigl(\delta_S+\varepsilon_{[n]\setminus S}\bigr).
$$
Thus $E_Sh$ is obtained by averaging over the coordinates in $S$ while
keeping the coordinates in $[n]\setminus S$ fixed.

Equivalently, if $X=(X_1,\ldots,X_n)$ is uniformly distributed on
$\Omega_n$, then
$$
E_Sh(X)
=
\mathbb E\!\left[
h(X)\,\middle|\,
\sigma(X_j:j\in[n]\setminus S)
\right].
$$

We will repeatedly use the fact that conditional expectation is an
$L^q$ contraction for every $q\ge1$:
$$
\|\mathbb E[f\mid\mathcal G]\|_q\le\|f\|_q.
$$
We will also use conditional Jensen's inequality in the form
$$
|\mathbb E[f\mid\mathcal G]|^q
\le
\mathbb E[|f|^q\mid\mathcal G],
\qquad q\ge1.
$$
\subsection{Martingales and the sharp Rosenthal inequality}

Let $(M_r)_{r=0}^N$ be a real-valued martingale with respect to a
filtration $(\mathcal F_r)_{r=0}^N$. We denote its martingale differences
by
$$
d_r=M_r-M_{r-1},
\qquad 1\le r\le N.
$$
The corresponding predictable square function is
$$
s_N
=
\left(
\sum_{r=1}^N
\mathbb E[d_r^2\mid\mathcal F_{r-1}]
\right)^{1/2}.
$$

The main martingale inequality used in the proof is the sharp martingale
version of Rosenthal's inequality due to Hitczenko \cite{Hitczenko}.

\begin{theorem}[Sharp martingale Rosenthal inequality]\label{thm:BR}
There exists a universal constant $C>0$ such that for every $p\ge2$
and every finite real-valued martingale $(M_r)_{r=0}^N$ with $M_0=0$,
with martingale differences $d_r=M_r-M_{r-1}$, one has
$$
\left\|\max_{0\le r\le N}|M_r|\right\|_p
\le
C\frac{p}{\operatorname{Log}p}
\left[
\left\|
\left(
\sum_{r=1}^N
\mathbb E[d_r^2\mid\mathcal F_{r-1}]
\right)^{1/2}
\right\|_p
+
\left\|\max_{1\le r\le N}|d_r|\right\|_p
\right],
$$
where
$$
\operatorname{Log}p=\max\{1,\log p\}.
$$
Moreover, the order $p/\operatorname{Log}p$ is optimal.
\end{theorem}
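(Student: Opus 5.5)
We derive Theorem~\ref{thm:BR} from the sharp Rosenthal inequality for \emph{independent} summands of Johnson, Schechtman and Zinn \cite{JSZ}, which has constant of order $p/\operatorname{Log}p$. The reduction uses Hitczenko's decoupling technique via tangent sequences. Since $p\ge2$, Doob's maximal inequality costs only the factor $p/(p-1)\le 2$. It therefore suffices to bound $\|M_N\|_p$ by the right-hand side.

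\textbf{Step 1 (decoupled tangent sequence).} Enlarge the probability space. Construct a sequence $(e_r)_{r=1}^N$ such that, conditionally on $\mathcal F_{r-1}$, the variable $e_r$ has the same law as $d_r$. In addition, the $e_r$ should be conditionally independent given $\mathcal G=\sigma(\mathcal F_N)$, with the law of $e_r$ given $\mathcal G$ equal to the law of $d_r$ given $\mathcal F_{r-1}$. This is the standard construction of a decoupled tangent sequence: take independent copies and use the regular conditional distributions. Then $\mathbb E[e_r\mid\mathcal G]=0$ and $\mathbb E[e_r^2\mid\mathcal G]=\mathbb E[d_r^2\mid\mathcal F_{r-1}]$. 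Consequently
$$\sum_r \mathbb E[e_r^2\mid\mathcal G]=s_N^2.$$

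\textbf{Step 2 (comparison with a $p$-independent constant).} Invoke Hitczenko's decoupling inequality: for every $p\ge1$,
$$\Bigl\|\sum_r d_r\Bigr\|_p\le C\Bigl\|\sum_r e_r\Bigr\|_p$$
with $C$ a universal constant not depending on $p$. This is the key point, and I expect it to be the main obstacle. A naive route through Burkholder's inequality loses a factor of order $p$, which would ruin the sharp order. The plan is to follow Hitczenko's proof. First treat conditionally symmetric sequences, where the comparison follows from a good-$\lambda$ inequality with $p$-free constants for tangent sequences. Then reduce to that case by symmetrization and a comparison of the maximal terms. One also needs the companion estimate
$$\Bigl\|\max_r|e_r|\Bigr\|_p\le C\Bigl\|\max_r|d_r|\Bigr\|_p,$$
which holds for tangent nonnegative sequences with an absolute constant (Kwapień--Woyczyński), again uniformly in $p$.

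\textbf{Step 3 (conditional independent Rosenthal).} Conditionally on $\mathcal G$, the $e_r$ are independent and mean zero. Apply the Johnson--Schechtman--Zinn inequality under $\mathbb P(\cdot\mid\mathcal G)$:
$$\mathbb E\Bigl[\Bigl|\sum_r e_r\Bigr|^p\Bigm|\mathcal G\Bigr]^{1/p}\lesssim\frac{p}{\operatorname{Log}p}\Bigl(s_N+\mathbb E\bigl[\max_r|e_r|^p\bigm|\mathcal G\bigr]^{1/p}\Bigr).$$
JSZ is usually stated with $(\sum_r\|e_r\|_p^p)^{1/p}$ in place of the maximal term. Since $\|\max_r|e_r|\|_p$ dominates that quantity up to a constant for independent variables, either version suffices. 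Taking $L^p$ norms over $\mathcal G$ and combining with Step~2 yields the inequality.

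\textbf{Optimality.} For optimality it suffices to consider independent summands, which are a special case of martingale differences. Take centered Poisson-type sums, $M_N=\sum_{r\le N}(\xi_r-1/N)$ with $\xi_r$ i.i.d.\ Bernoulli$(1/N)$ and $N\to\infty$. In this example $\|M_N\|_p\asymp p/\operatorname{Log}p$, while both terms on the right-hand side are $O(1)$. This is the same example that shows sharpness in \cite{JSZ}.
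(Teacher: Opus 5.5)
The paper does not prove this statement; it quotes it from Hitczenko \cite{Hitczenko}. Your outline is essentially Hitczenko's own route: Doob's inequality, a decoupled tangent sequence, Hitczenko's $p$-independent decoupling bound $\|\sum_r d_r\|_p\le C\|\sum_r e_r\|_p$, the Johnson--Schechtman--Zinn inequality applied conditionally on $\mathcal G$, and the tangent comparison $\mathbb P(\max_r|e_r|>t)\le 2\,\mathbb P(\max_r|d_r|>t)$. Step~1, the final assembly and the Poisson example for optimality are fine. One caution on Step~2: the uniformity in $p$ does not come out of a good-$\lambda$ argument with fixed parameters, which controls $L^p$ norms only for $p$ in a bounded range. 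That uniformity is the real content of Hitczenko's theorem and has to be quoted, just as the paper quotes the whole statement.

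There is, however, a false claim in Step~3. For independent variables, $\|\max_r|X_r|\|_p$ does \emph{not} dominate $(\sum_r\|X_r\|_p^p)^{1/p}$: for $N$ independent signs these are $1$ and $N^{1/p}$. The trivial inequality goes the other way. So the maximal form of the JSZ bound is the stronger one, and your remark does not derive it from the usual $\ell^p$-sum form. What is true is domination after adding the variance, and this suffices because $s_N$ is already on the right-hand side: for independent $X_1,\dots,X_N$ and $p\ge 2$,
$$
\Bigl(\sum_r\|X_r\|_p^p\Bigr)^{1/p}\lesssim\bigl\|\max_r|X_r|\bigr\|_p+\Bigl(\sum_r\mathbb E X_r^2\Bigr)^{1/2}.
$$
To prove it, let $t_0=\inf\{t\ge0:\sum_r\mathbb P(|X_r|>t)\le\frac12\}$ and $\sigma^2=\sum_r\mathbb E X_r^2$. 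By right-continuity, $\sum_r\mathbb P(|X_r|>t_0)\le\frac12$.
\begin{itemize}
\item The parts of $\mathbb E|X_r|^p$ on $\{|X_r|\le t_0\}$ sum to at most $t_0^{p-2}\sigma^2\le\max(t_0,\sigma)^p$.
\item The parts on $\{|X_r|>t_0\}$ sum to at most $2\,\mathbb E\max_r|X_r|^p$. To see this, restrict $\max_r|X_r|^p$ to the disjoint events on which exactly one $|X_r|$ exceeds $t_0$, and use independence together with $\prod_{s\ne r}\mathbb P(|X_s|\le t_0)\ge\frac12$.
\item For $t<t_0$ one has $\mathbb P(\max_r|X_r|>t)\ge 1-e^{-1/2}$, so $t_0\lesssim\|\max_r|X_r|\|_p$.
\end{itemize}
Apply this conditionally on $\mathcal G$, where $\sum_r\mathbb E[e_r^2\mid\mathcal G]=s_N^2$. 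This converts the standard JSZ bound into the maximal form you use, and the rest of your argument goes through. For the paper's application the point is moot, since there $\|\max_r|d_r|\|_p^p$ is immediately bounded by $\sum_r\|d_r\|_p^p$.
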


Since
$$
|M_k|\le\max_{0\le r\le k}|M_r|,
$$
we will apply Theorem~\ref{thm:BR} to the martingale stopped at time $k$.

\subsection{Metric $X_p$ spaces}

For $m,n\in\mathbb N$, write $\mathbb Z_{2m}^n=(\mathbb Z/(2m\mathbb Z))^n,$
equipped with normalized counting measure. 

Fix $p\in(0,\infty)$. Following Naor and Schechtman
\cite{NaorSchechtman}, a metric space $(X,d_X)$ is said to be an
$X_p$ metric space if there exists $\mathfrak X\in(0,\infty)$ such
that for every $n\in\mathbb N$ and every $k\in[n]$ there exists
$m\in\mathbb N$ for which every mapping
$$
f:\mathbb Z_{2m}^n\to X
$$
satisfies
$$
\begin{aligned}
&\left(
\frac1{\binom nk}
\sum_{\substack{S\subseteq[n]\\ |S|=k}}
\mathbb E_{x,\varepsilon}
\left[
d_X\bigl(f(x+m\varepsilon_S),f(x)\bigr)^p
\right]
\right)^{1/p}
\\
&\qquad\le
\mathfrak X\,m
\left(
\frac{k}{n}\sum_{j=1}^n
\mathbb E_{x}
\left[
d_X\bigl(f(x+e_j),f(x)\bigr)^p
\right]
+
\left(\frac{k}{n}\right)^{p/2}
\mathbb E_{x,\varepsilon}
\left[
d_X\bigl(f(x+\varepsilon),f(x)\bigr)^p
\right]
\right)^{1/p},
\end{aligned}
$$
where $(x,\varepsilon)$ is uniformly distributed on
$\mathbb Z_{2m}^n\times\{-1,1\}^n$.

\section{Proof of Theorem~\ref{thm:chaos}}

Let $X=(X_1,\ldots,X_n)$ be uniformly distributed on $\Omega_n$. Thus,
$X_1,\ldots,X_n$ are independent Rademacher random variables:
$$
\mathbb P(X_j=1)=\mathbb P(X_j=-1)=\frac12.
$$

Independently of $X$, let
$$
\pi=(\pi_1,\ldots,\pi_n)
$$
be a uniformly distributed random permutation of $[n]$. For
$0\le r\le n$, put
$$
S_r=\{\pi_1,\ldots,\pi_r\},
\qquad S_0=\emptyset.
$$
Let $\mathcal F_0$ be the trivial $\sigma$-algebra and, for $1\le r\le n$,
$$
\mathcal F_r
=
\sigma(\pi_1,\ldots,\pi_r,X_{\pi_1},\ldots,X_{\pi_r}).
$$
Set
$$
M_r=\mathbb E[h(X)\mid\mathcal F_r].
$$
Then $(M_r)_{r=0}^n$ is a martingale with respect to
$(\mathcal F_r)_{r=0}^n$. Since $h$ has mean zero,
$$
M_0=\mathbb Eh=0.
$$

Fix $k\in[n]$. By the definition of the filtration,
$$
M_k=E_{[n]\setminus S_k}h(X).
$$
Since $S_k$ is uniformly distributed over all $k$-element subsets of
$[n]$, we have
$$
\begin{aligned}
\|M_k\|_{L^p(X,\pi)}^p
&=
\mathbb E_{X,\pi}|E_{[n]\setminus S_k}h(X)|^p\\
&=
\mathbb E_\pi\|E_{[n]\setminus S_k}h\|_p^p\\
&=
\frac{1}{\binom{n}{k}}
\sum_{\substack{S\subseteq[n]\\ |S|=k}}
\|E_{[n]\setminus S}h\|_p^p.
\end{aligned}
$$

Let
$$
d_r=M_r-M_{r-1}.
$$
Applying Theorem~\ref{thm:BR} to the martingale
$(M_r)_{r=0}^k$ gives
$$
\begin{aligned}
\|M_k\|_{L^p(X,\pi)}
&\le
\left\|\max_{0\le r\le k}|M_r|\right\|_{L^p(X,\pi)}\\
&\le
C\frac{p}{\operatorname{Log}p}
\left[
\left\|
\left(
\sum_{r=1}^k
\mathbb E[d_r^2\mid\mathcal F_{r-1}]
\right)^{1/2}
\right\|_{L^p(X,\pi)}
+
\left\|\max_{1\le r\le k}|d_r|\right\|_{L^p(X,\pi)}
\right].
\end{aligned}
$$
Moreover,
$$
\left\|\max_{1\le r\le k}|d_r|\right\|_{L^p(X,\pi)}^p
\le
\sum_{r=1}^k\|d_r\|_{L^p(X,\pi)}^p.
$$
Thus it is enough to prove the two norm estimates
$$
\left\|
\left(
\sum_{r=1}^k
\mathbb E[d_r^2\mid\mathcal F_{r-1}]
\right)^{1/2}
\right\|_{L^p(X,\pi)}
\lesssim
\sqrt{\frac{k}{n}}\,\|h\|_p,
$$
and
$$
\left(
\sum_{r=1}^k\|d_r\|_{L^p(X,\pi)}^p
\right)^{1/p}
\lesssim
\left(
\frac{k}{n}\sum_{j=1}^n\|\partial_jh\|_p^p
\right)^{1/p}.
$$

We first consider the case $k\le n/2$. The case $k>n/2$ will be
treated separately at the end of the proof.

Fix $1\le r\le k$. Conditional on $\mathcal F_{r-1}$, the unrevealed
coordinates
$$
\{X_j:j\in[n]\setminus S_{r-1}\}
$$
are independent Rademacher random variables. Hence, for every
$j\in[n]\setminus S_{r-1}$,
$$
\mathbb E[h(X)\mid\mathcal F_{r-1},X_j]
=
\mathbb E[h(X)\mid\mathcal F_{r-1}]
+
X_j\mathbb E[X_jh(X)\mid\mathcal F_{r-1}].
$$
For fixed $r$, set
$$
a_j=\mathbb E[X_jh(X)\mid\mathcal F_{r-1}],
\qquad j\in[n]\setminus S_{r-1}.
$$
On the event $\{\pi_r=j\}$, conditioning additionally on $\pi_r=j$
does not give any further information about $X$, and hence
$$
M_r=M_{r-1}+X_ja_j.
$$
Therefore, on $\{\pi_r=j\}$,
$$
d_r=X_ja_j.
$$
Consequently,
$$
d_r
=
\sum_{j\in[n]\setminus S_{r-1}}
X_ja_j\mathbf 1_{\{\pi_r=j\}}.
$$
Since the events $\{\pi_r=j\}$ are pairwise disjoint,
$$
d_r^2
=
\sum_{j\in[n]\setminus S_{r-1}}
a_j^2\mathbf 1_{\{\pi_r=j\}}.
$$
Since $a_j$ is $\mathcal F_{r-1}$-measurable,
$$
\mathbb E[d_r^2\mid\mathcal F_{r-1}]
=
\sum_{j\in[n]\setminus S_{r-1}}
a_j^2
\mathbb P(\pi_r=j\mid\mathcal F_{r-1}).
$$
Given $\mathcal F_{r-1}$, the next index $\pi_r$ is uniformly
distributed among the $n-r+1$ unrevealed indices. Therefore
$$
\mathbb P(\pi_r=j\mid\mathcal F_{r-1})
=
\frac{1}{n-r+1},
\qquad j\in[n]\setminus S_{r-1},
$$
and hence
$$
\mathbb E[d_r^2\mid\mathcal F_{r-1}]
=
\frac{1}{n-r+1}
\sum_{j\in[n]\setminus S_{r-1}}a_j^2.
$$

Fix an atom $A$ of $\mathcal F_{r-1}$. On $A$, the family
$$
\{X_j:j\in[n]\setminus S_{r-1}\}
$$
forms an orthonormal system in
$L^2(A,\mathbb P(\,\cdot\,\mid A))$, since for
$i,j\in[n]\setminus S_{r-1}$,
$$
\mathbb E[X_iX_j\mid A]=\delta_{ij}.
$$
Moreover, on $A$,
$$
a_j
=
\mathbb E[X_jh(X)\mid A]
=
\langle h,X_j\rangle_{L^2(A,\mathbb P(\cdot\mid A))}.
$$
Hence, by Bessel's inequality,
$$
\sum_{j\in[n]\setminus S_{r-1}}
|\mathbb E[X_jh(X)\mid A]|^2
\le
\mathbb E[h(X)^2\mid A].
$$
Since $A$ was arbitrary,
$$
\sum_{j\in[n]\setminus S_{r-1}}a_j^2
\le
\mathbb E[h(X)^2\mid\mathcal F_{r-1}].
$$
Combining the last two estimates gives
$$
\mathbb E[d_r^2\mid\mathcal F_{r-1}]
\le
\frac{1}{n-r+1}
\mathbb E[h(X)^2\mid\mathcal F_{r-1}].
$$

Define the predictable square function
$$
s_k
=
\left(
\sum_{r=1}^k
\mathbb E[d_r^2\mid\mathcal F_{r-1}]
\right)^{1/2}.
$$
Then
$$
\|s_k\|_p^2
=
\left\|
\sum_{r=1}^k
\mathbb E[d_r^2\mid\mathcal F_{r-1}]
\right\|_{p/2}.
$$
Since $p\ge2$, the triangle inequality in $L^{p/2}$ gives
$$
\begin{aligned}
\|s_k\|_p^2
&\le
\sum_{r=1}^k
\left\|
\mathbb E[d_r^2\mid\mathcal F_{r-1}]
\right\|_{p/2}\\
&\le
\sum_{r=1}^k
\frac{1}{n-r+1}
\left\|
\mathbb E[h^2\mid\mathcal F_{r-1}]
\right\|_{p/2}.
\end{aligned}
$$
Since conditional expectation is a contraction on $L^{p/2}$,
$$
\left\|
\mathbb E[h^2\mid\mathcal F_{r-1}]
\right\|_{p/2}
\le
\|h^2\|_{p/2}
=
\|h\|_p^2.
$$
Thus
$$
\|s_k\|_p^2
\le
\left(
\sum_{r=1}^k\frac{1}{n-r+1}
\right)\|h\|_p^2.
$$
Since $k\le n/2$, for $1\le r\le k$,
$$
n-r+1\ge n-k+1\ge\frac n2,
$$
and therefore
$$
\sum_{r=1}^k\frac{1}{n-r+1}
\le
\frac{2k}{n}.
$$
Consequently,
$$
\|s_k\|_p
\le
\sqrt{\frac{2k}{n}}\|h\|_p.
$$

We now estimate the martingale differences. Fix $1\le r\le k$ and
$j\in[n]\setminus S_{r-1}$. On the event $\{\pi_r=j\}$, we have already
shown that
$$
\mathbb E[h(X)\mid\mathcal F_r]
=
\mathbb E[h(X)\mid\mathcal F_{r-1}]
+
X_j\mathbb E[X_jh(X)\mid\mathcal F_{r-1}].
$$
On the event $\{\pi_r=j\}$, the sigma-algebra $\mathcal F_r$ is
obtained from $\mathcal F_{r-1}$ by adjoining the revealed coordinate
$X_j$. Flipping this coordinate sends $X_j$ to $-X_j$, while the
conditional averaging over all remaining unrevealed coordinates is
unchanged. Therefore
$$
\mathbb E[h(X^{(j)})\mid\mathcal F_r]
=
\mathbb E[h(X)\mid\mathcal F_{r-1}]
-
X_j\mathbb E[X_jh(X)\mid\mathcal F_{r-1}].
$$
Hence
$$
\mathbb E[D_jh(X)\mid\mathcal F_r]
=
X_j\mathbb E[X_jh(X)\mid\mathcal F_{r-1}]
=
d_r
$$
on $\{\pi_r=j\}$. Thus
$$
d_r
=
\mathbb E[D_{\pi_r}h(X)\mid\mathcal F_r].
$$
By conditional Jensen's inequality,
$$
|d_r|^p
\le
\mathbb E[|D_{\pi_r}h(X)|^p\mid\mathcal F_r].
$$
Taking expectations and using that, marginally, $\pi_r$ is uniformly
distributed on $[n]$ and is independent of $X$, we obtain
$$
\mathbb E_{X,\pi}|d_r|^p
\le
\frac1n\sum_{j=1}^n\mathbb E_X|D_jh(X)|^p.
$$
Summing over $r=1,\ldots,k$ gives
$$
\sum_{r=1}^k\|d_r\|_{L^p(X,\pi)}^p
\le
\frac{k}{n}\sum_{j=1}^n\|D_jh\|_p^p.
$$
Since $2D_jh=\partial_jh$,
$$
\|D_jh\|_p^p
=
2^{-p}\|\partial_jh\|_p^p.
$$
Consequently,
$$
\sum_{r=1}^k\|d_r\|_{L^p(X,\pi)}^p
\le
\frac{1}{2^p}\frac{k}{n}
\sum_{j=1}^n\|\partial_jh\|_p^p.
$$
Taking $p$th roots,
$$
\left(
\sum_{r=1}^k\|d_r\|_{L^p(X,\pi)}^p
\right)^{1/p}
\le
\frac12
\left(
\frac{k}{n}\sum_{j=1}^n\|\partial_jh\|_p^p
\right)^{1/p}.
$$
Together with the square-function estimate, this proves the desired
inequality when $k\le n/2$.

Now assume $k>n/2$. For every $S\subseteq[n]$, the operator
$E_{[n]\setminus S}$ is a conditional expectation and therefore an
$L^p$ contraction. Hence
$$
\left(
\frac{1}{\binom{n}{k}}
\sum_{\substack{S\subseteq[n]\\ |S|=k}}
\|E_{[n]\setminus S}h\|_p^p
\right)^{1/p}
\le
\|h\|_p.
$$
Since $k>n/2$,
$$
\|h\|_p
\le
\sqrt{2}\sqrt{\frac{k}{n}}\|h\|_p.
$$
Also,
$$
\sqrt{\frac{k}{n}}\|h\|_p
\le
\left(
\frac{k}{n}\sum_{j=1}^n\|\partial_jh\|_p^p
+
\left(\frac{k}{n}\right)^{p/2}\|h\|_p^p
\right)^{1/p}.
$$
Therefore
$$
\left(
\frac{1}{\binom{n}{k}}
\sum_{\substack{S\subseteq[n]\\ |S|=k}}
\|E_{[n]\setminus S}h\|_p^p
\right)^{1/p}
\le
\sqrt2
\left(
\frac{k}{n}\sum_{j=1}^n\|\partial_jh\|_p^p
+
\left(\frac{k}{n}\right)^{p/2}\|h\|_p^p
\right)^{1/p}.
$$
Since $p/\log p$ is bounded below by a positive universal constant for
$p\ge2$, the factor $\sqrt2$ can be absorbed into the implicit constant.
This proves the estimate for all $k\in[n]$.

The factor $p/\log p$ is optimal up to a universal multiplicative
constant already in the first-order case, by Johnson, Schechtman and
Zinn \cite{JSZ}. Hence no smaller asymptotic order can hold uniformly
for all $n$ and $k$. This completes the proof.

\section{Deduction of the metric $X_p$ inequality}

We now deduce the metric $X_p$ inequality from
Theorem~\ref{thm:chaos}. The argument follows the reduction introduced
by Naor in \cite{Naor}. We include the details in order to keep track of
the dependence on $p$.

We first work in the scalar-valued setting and assume that
$$
f:\mathbb Z_{8m}^n\to\mathbb R.
$$
For $S\subseteq[n]$, define the averaging operator
$$
T_Sf(x)
=
\frac1{2^n}
\sum_{\delta\in\{-1,1\}^n}
f(x+2\delta_S),
\qquad x\in\mathbb Z_{8m}^n.
$$

We first record the following smoothing estimate.

\begin{lemma}[Naor~\cite{Naor}, Lemma~8]\label{lem:smoothing}
Let $p\in[1,\infty)$. For every $S\subseteq[n]$,
$$
\left(
\mathbb E_x
|f(x)-T_Sf(x)|^p
\right)^{1/p}
\le
2
\left(
\mathbb E_{x,\varepsilon}
|f(x+\varepsilon)-f(x)|^p
\right)^{1/p}.
$$
\end{lemma}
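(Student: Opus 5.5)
The approach is to write $f - T_Sf$ as an average of differences along paths made of unit steps $\pm1$ in every coordinate, and then apply the triangle inequality in $L^p$ together with translation invariance of the normalized counting measure on $\mathbb Z_{8m}^n$.

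First, expand the definition of $T_S$:
$$
f(x)-T_Sf(x)=\frac1{2^n}\sum_{\delta\in\{-1,1\}^n}\bigl(f(x)-f(x+2\delta_S)\bigr).
$$
Minkowski's inequality then gives
$$
\bigl(\mathbb E_x|f(x)-T_Sf(x)|^p\bigr)^{1/p}\le \max_{\delta}\bigl(\mathbb E_x|f(x)-f(x+2\delta_S)|^p\bigr)^{1/p}.
$$
A cleaner route is to average over $\delta$ rather than take the maximum, keeping the expectation over $\delta$ inside. The aim is to compare this with the full-cube quantity $\mathbb E_{x,\varepsilon}|f(x+\varepsilon)-f(x)|^p$, in which all $n$ coordinates move simultaneously.

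Next, decompose $2\delta_S$ as the sum of two full sign vectors. Given $\delta\in\{-1,1\}^n$, let $\varepsilon$ be a uniform random sign vector, and define $\varepsilon'$ by $\varepsilon'_j=\delta_j$ for $j\in S$ and $\varepsilon'_j=-\varepsilon_j$ for $j\notin S$. Choosing $\varepsilon_j=\delta_j$ on $S$ as well, we get $\varepsilon+\varepsilon'=2\delta_S$. If $\delta$ is uniform, then $\varepsilon$ and $\varepsilon'$ are each uniformly distributed on $\{-1,1\}^n$. This gives the splitting
$$
f(x)-f(x+2\delta_S)=\bigl(f(x)-f(x+\varepsilon)\bigr)+\bigl(f(x+\varepsilon)-f(x+\varepsilon+\varepsilon')\bigr).
$$

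Now apply Jensen's inequality to move the average over $\delta$ outside the $p$th power, and use the triangle inequality in $L^p(x,\delta,\varepsilon)$. The first term has norm exactly $(\mathbb E_{x,\varepsilon}|f(x+\varepsilon)-f(x)|^p)^{1/p}$. For the second term, substitute $y=x+\varepsilon$; this is measure-preserving for each fixed $\varepsilon$. Since $\varepsilon'$ is uniform, the second term has the same norm. Adding the two contributions gives the constant $2$.

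The main obstacle is making the coupling exact. One must check that for uniform $\delta$, and with $\varepsilon$ uniform and independent off $S$, the vector $\varepsilon'$ is marginally uniform, and that $\varepsilon'$ is independent of $x+\varepsilon$ after the change of variables. Both hold because $\varepsilon'$ restricted to $S$ is $\delta_S$ (uniform) and $\varepsilon'$ off $S$ is $-\varepsilon$ (uniform). The translation by $\varepsilon$ is handled by shift-invariance of the uniform measure on $\mathbb Z_{8m}^n$ for each fixed $\varepsilon$.
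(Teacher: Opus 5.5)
Your proof is correct. Jensen in $\delta$, the splitting $2\delta_S=\varepsilon+\varepsilon'$ into two uniformly distributed sign vectors, Minkowski in $L^p(x,\delta,\varepsilon)$, and translation invariance together give exactly the constant $2$. The preliminary bound via $\max_\delta$ is not needed, and you can drop the auxiliary randomness by taking $\varepsilon=\delta$ and $\varepsilon'=\delta_S-\delta_{[n]\setminus S}$.

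The paper gives no proof of its own for this lemma; it simply cites Naor's Lemma~8. I cannot compare against a proof in the text, but your splitting argument is the natural route to the constant $2$.
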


We now follow Naor's reduction. The only point at which we modify the
argument is the application of the Rademacher-chaos inequality, where
we use Theorem~\ref{thm:chaos}.

\begin{lemma}\label{lem:metric-main}
Suppose that $p\ge2$ and $k\in[n]$. Then
$$
\begin{aligned}
&\left(
\frac1{\binom nk}
\sum_{\substack{S\subseteq[n]\\|S|=k}}
\mathbb E_{x,\varepsilon}
\left|
T_{[n]\setminus S}f(x+4m\varepsilon_S)
-
T_{[n]\setminus S}f(x)
\right|^p
\right)^{1/p}
\\
&\qquad\lesssim
\frac{p}{\log p}\,m
\left(
\frac{k}{n}
\sum_{j=1}^n
\mathbb E_x
|f(x+e_j)-f(x)|^p
+
\left(\frac{k}{n}\right)^{p/2}
\mathbb E_{x,\varepsilon}
|f(x+\varepsilon)-f(x)|^p
\right)^{1/p}.
\end{aligned}
$$
\end{lemma}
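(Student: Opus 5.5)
The plan is to reduce Lemma~\ref{lem:metric-main} to Theorem~\ref{thm:chaos}, applied for each fixed base point to a function on the Hamming cube. The factor $m$ will come from a telescoping argument along the direction $\varepsilon_S$.

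\emph{Step 1: telescoping.} Fix $S$ with $|S|=k$ and write $g=T_{[n]\setminus S}f$. Since $4m\varepsilon_S=m\cdot 4\varepsilon_S$, we write
$$
g(x+4m\varepsilon_S)-g(x)=\sum_{t=0}^{m-1}\bigl[g(x+4(t+1)\varepsilon_S)-g(x+4t\varepsilon_S)\bigr].
$$
The triangle inequality in $L^p(x,\varepsilon)$ and translation invariance of the uniform measure on $\mathbb Z_{8m}^n$ in $x$ then give a bound by
$$
m\bigl(\mathbb E_{x,\varepsilon}|g(x+4\varepsilon_S)-g(x)|^p\bigr)^{1/p}.
$$
Substituting $y=x+2\varepsilon_S$, again by translation invariance, this equals
$$
m\bigl(\mathbb E_{y,\varepsilon}|g(y+2\varepsilon_S)-g(y-2\varepsilon_S)|^p\bigr)^{1/p}.
$$

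\emph{Step 2: identification with a conditional expectation.} For fixed $y\in\mathbb Z_{8m}^n$, define $h_y:\Omega_n\to\mathbb R$ by
$$
h_y(\delta)=f(y+2\delta)-\mathbb E_{\delta'}f(y+2\delta').
$$
This function has mean zero. Directly from the definitions of $E_{[n]\setminus S}$ and $T_{[n]\setminus S}$,
$$
E_{[n]\setminus S}h_y(\varepsilon)=T_{[n]\setminus S}f(y+2\varepsilon_S)-c_y,
$$
where the constant $c_y$ does not depend on $\varepsilon$ or $S$. Hence
$$
g(y+2\varepsilon_S)-g(y-2\varepsilon_S)=E_{[n]\setminus S}h_y(\varepsilon)-E_{[n]\setminus S}h_y(-\varepsilon).
$$
Since $\varepsilon\mapsto-\varepsilon$ preserves the uniform measure, the $L^p(\varepsilon)$ norm of this difference is at most $2\|E_{[n]\setminus S}h_y\|_p$. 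Averaging over $S$ and $y$ and applying Theorem~\ref{thm:chaos} to each $h_y$ (then averaging the $p$th powers in $y$) shows that the left side of Lemma~\ref{lem:metric-main} is
$$
\lesssim \frac{p}{\log p}\, m\Bigl(\frac kn\sum_j\mathbb E_y\|\partial_jh_y\|_p^p+\Bigl(\frac kn\Bigr)^{p/2}\mathbb E_y\|h_y\|_p^p\Bigr)^{1/p}.
$$

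\emph{Step 3: bounding the cube quantities by $f$.} First, $\partial_jh_y(\delta)=\pm\bigl(f(y+2\delta+2e_j)-f(y+2\delta-2e_j)\bigr)$. This is a sum of four unit-step differences in direction $e_j$, so by translation invariance $\mathbb E_y\|\partial_jh_y\|_p^p\le 4^p\,\mathbb E_x|f(x+e_j)-f(x)|^p$. Second, Jensen's inequality gives $\mathbb E_y\|h_y\|_p^p\le \mathbb E_{y,\delta,\delta'}|f(y+2\delta)-f(y+2\delta')|^p$. Each of $f(y+2\delta)-f(y)$ and $f(y)-f(y+2\delta')$ splits as two differences of the form $f(z+\eta)-f(z)$ with $\eta$ uniform on $\{-1,1\}^n$ and $z$ uniform: for instance $f(y+2\delta)-f(y+\delta)$ and $f(y+\delta)-f(y)$. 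Hence
$$
\mathbb E_y\|h_y\|_p^p\le 4^p\,\mathbb E_{x,\varepsilon}|f(x+\varepsilon)-f(x)|^p.
$$
Alternatively, this second bound follows from Lemma~\ref{lem:smoothing} with $S=[n]$. Substituting both bounds into Step 2 absorbs all numerical factors into $\lesssim$ and yields the lemma.

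The main obstacle I expect is Step 2. One must check that the shifts by $2\delta$ in the definition of $h_y$ match exactly the shifts in $T_{[n]\setminus S}$, and that the symmetrization in $\pm\varepsilon$ (rather than comparing with $g(y)$ itself) is what lets a single mean-zero function $h_y$ account for both endpoints. The telescoping in Step 1 uses the step $4\varepsilon_S$ precisely so that this symmetric form appears after the change of variables; the remaining steps are routine translation-invariance bookkeeping.
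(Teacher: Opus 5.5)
Your proof is correct and follows the paper's argument: the same telescoping to $m\,\|g(y+2\varepsilon_S)-g(y-2\varepsilon_S)\|_p$, Theorem~\ref{thm:chaos} applied at each base point, and translation-invariance bounds on $\partial_j h_y$ and $h_y$. The only variation is that the paper applies the theorem to the odd (hence mean-zero) function $f(x+2\varepsilon)-f(x-2\varepsilon)$, which is exactly your $h_y(\varepsilon)-h_y(-\varepsilon)$, whereas you apply it to the centered $h_y$ and pay a harmless factor $2$ (gaining $4^p$ instead of $8^p$ in the derivative bound).

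There are two small slips that do not affect the argument. First, $\partial_j h_y(\delta)=\pm\bigl(f(z+2e_j)-f(z-2e_j)\bigr)$ with $z=y+2\delta-2\delta_je_j$, so the $j$th coordinate of $\delta$ must be removed from your formula. Second, your aside via Lemma~\ref{lem:smoothing} with $S=[n]$ needs an extra triangle inequality, because $h_y(\delta)$ compares $f(y+2\delta)$ with $T_{[n]}f(y)$ rather than with $T_{[n]}f(y+2\delta)$.
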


\begin{proof}
Fix $S\subseteq[n]$. By telescoping and the triangle inequality in $L^p$,
$$
\begin{aligned}
&\left(
\mathbb E_{x,\varepsilon}
\left|
T_{[n]\setminus S}f(x+4m\varepsilon_S)
-
T_{[n]\setminus S}f(x)
\right|^p
\right)^{1/p}
\\
&\qquad\le
m
\left(
\mathbb E_{x,\varepsilon}
\left|
T_{[n]\setminus S}f(x+2\varepsilon_S)
-
T_{[n]\setminus S}f(x-2\varepsilon_S)
\right|^p
\right)^{1/p}.
\end{aligned}
$$
Here we used translation invariance of the uniform measure on
$\mathbb Z_{8m}^n$ in each term of the telescoping sum.

For every $x\in\mathbb Z_{8m}^n$, define
$$
h_x(\varepsilon)
=
f(x+2\varepsilon)-f(x-2\varepsilon),
\qquad
\varepsilon\in\{-1,1\}^n.
$$
Notice that $h_x$ is odd, and therefore
$$
\mathbb E_{\varepsilon}h_x(\varepsilon)=0.
$$
Moreover,
$$
T_{[n]\setminus S}f(x+2\varepsilon_S)
-
T_{[n]\setminus S}f(x-2\varepsilon_S)
=
E_{[n]\setminus S}h_x(\varepsilon).
$$
Consequently,
$$
\begin{aligned}
&\frac1{\binom nk}
\sum_{\substack{S\subseteq[n]\\|S|=k}}
\mathbb E_{x,\varepsilon}
\left|
T_{[n]\setminus S}f(x+4m\varepsilon_S)
-
T_{[n]\setminus S}f(x)
\right|^p
\\
&\qquad\le
m^p
\mathbb E_x
\left[
\frac1{\binom nk}
\sum_{\substack{S\subseteq[n]\\|S|=k}}
\|E_{[n]\setminus S}h_x\|_p^p
\right].
\end{aligned}
$$

Let $C_0>0$ be a universal constant for which
Theorem~\ref{thm:chaos} holds. Applying that theorem to $h_x$,
separately for every $x\in\mathbb Z_{8m}^n$, and then averaging in
$x$, yields
$$
\begin{aligned}
&\frac1{\binom nk}
\sum_{\substack{S\subseteq[n]\\|S|=k}}
\mathbb E_{x,\varepsilon}
\left|
T_{[n]\setminus S}f(x+4m\varepsilon_S)
-
T_{[n]\setminus S}f(x)
\right|^p
\\
&\qquad\le
\left(C_0\frac{p}{\log p}\right)^p m^p
\left[
\frac{k}{n}
\sum_{j=1}^n
\mathbb E_x\|\partial_jh_x\|_p^p
+
\left(\frac{k}{n}\right)^{p/2}
\mathbb E_x\|h_x\|_p^p
\right].
\end{aligned}
$$

It remains to estimate the two terms on the right. For $j\in[n]$,
$$
\begin{aligned}
\partial_jh_x(\varepsilon)
&=
f(x+2\varepsilon)-f(x-2\varepsilon)
\\
&\quad
-f(x+2\varepsilon-4\varepsilon_je_j)
+f(x-2\varepsilon+4\varepsilon_je_j).
\end{aligned}
$$
Hence
$$
\begin{aligned}
|\partial_jh_x(\varepsilon)|^p
&\le
2^{p-1}
|f(x+2\varepsilon)
-f(x+2\varepsilon-4\varepsilon_je_j)|^p
\\
&\quad+
2^{p-1}
|f(x-2\varepsilon)
-f(x-2\varepsilon+4\varepsilon_je_j)|^p.
\end{aligned}
$$
Averaging over $(x,\varepsilon)$ and using translation invariance gives
$$
\mathbb E_x\|\partial_jh_x\|_p^p
\le
2^p
\mathbb E_x|f(x+4e_j)-f(x)|^p.
$$
Since
$$
f(x+4e_j)-f(x)
=
\sum_{\ell=1}^4
\bigl(
f(x+\ell e_j)-f(x+(\ell-1)e_j)
\bigr),
$$
we obtain
$$
|f(x+4e_j)-f(x)|^p
\le
4^{p-1}
\sum_{\ell=1}^4
|f(x+\ell e_j)-f(x+(\ell-1)e_j)|^p.
$$
After averaging in $x$,
$$
\mathbb E_x\|\partial_jh_x\|_p^p
\le
8^p
\mathbb E_x|f(x+e_j)-f(x)|^p.
$$
Therefore
$$
\sum_{j=1}^n
\mathbb E_x\|\partial_jh_x\|_p^p
\le
8^p
\sum_{j=1}^n
\mathbb E_x|f(x+e_j)-f(x)|^p.
$$

Similarly,
$$
h_x(\varepsilon)
=
f(x+2\varepsilon)-f(x-2\varepsilon),
$$
and a four-step telescoping argument gives
$$
|h_x(\varepsilon)|^p
\le
4^{p-1}
\sum_{\ell=-1}^{2}
|f(x+\ell\varepsilon)
-f(x+(\ell-1)\varepsilon)|^p.
$$
Averaging and using translation invariance yields
$$
\mathbb E_x\|h_x\|_p^p
\le
4^p
\mathbb E_{x,\varepsilon}
|f(x+\varepsilon)-f(x)|^p.
$$

Substituting the last two estimates into the previous inequality and
taking $p$th roots proves the lemma.
\end{proof}

Now we prove the metric $X_p$ inequality.

\begin{theorem}[Sharp metric $X_p$ inequality]
\label{thm:metric}
Suppose that $k,m,n\in\mathbb N$, $k\in[n]$, $p\ge2$, and
$m\ge\sqrt{n/k}$. Then every
$f:\mathbb Z_{8m}^n\to L^p$ satisfies
$$
\begin{aligned}
&\left(
\frac1{\binom nk}
\sum_{\substack{S\subseteq[n]\\|S|=k}}
\mathbb E_{x,\varepsilon}
\left[
\left\|
f(x+4m\varepsilon_S)-f(x)
\right\|_{L^p}^p
\right]
\right)^{1/p}
\\
&\qquad\lesssim
\frac{p}{\log p}\,m
\left(
\frac{k}{n}
\sum_{j=1}^n
\mathbb E_x
\left[
\|f(x+e_j)-f(x)\|_{L^p}^p
\right]
+
\left(\frac{k}{n}\right)^{p/2}
\mathbb E_{x,\varepsilon}
\left[
\|f(x+\varepsilon)-f(x)\|_{L^p}^p
\right]
\right)^{1/p}.
\end{aligned}
$$
Moreover, the order $p/\log p$ is sharp up to a universal
multiplicative constant.
\end{theorem}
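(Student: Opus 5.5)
We first reduce to the scalar case. Since $L^p=L^p(\mu)$ and both sides of the inequality are $p$-th powers of $L^p$ norms averaged over $(x,\varepsilon,S)$, Fubini's theorem lets us write $f(x)=f(x)(\omega)$ and apply a scalar estimate to $f_\omega:=f(\cdot)(\omega):\mathbb Z_{8m}^n\to\mathbb R$ for each $\omega$. Integrating in $\omega$ then gives the $L^p$-valued statement with the same constant. So it suffices to prove the inequality for $f:\mathbb Z_{8m}^n\to\mathbb R$.

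For scalar $f$, fix $S$ with $|S|=k$ and split
\begin{align*}
f(x+4m\varepsilon_S)-f(x)
={}&\bigl[f(x+4m\varepsilon_S)-T_{[n]\setminus S}f(x+4m\varepsilon_S)\bigr]\\
&+\bigl[T_{[n]\setminus S}f(x+4m\varepsilon_S)-T_{[n]\setminus S}f(x)\bigr]\\
&+\bigl[T_{[n]\setminus S}f(x)-f(x)\bigr].
\end{align*}
The middle term, averaged over $S$ and raised to the $p$-th power, is exactly what Lemma~\ref{lem:metric-main} controls. It is bounded by $\frac{p}{\log p}\,m$ times the desired right-hand side.

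For the two outer terms, translation invariance of the uniform measure on $\mathbb Z_{8m}^n$ reduces both to $\bigl(\mathbb E_x|f(x)-T_{[n]\setminus S}f(x)|^p\bigr)^{1/p}$. By Lemma~\ref{lem:smoothing} this is at most $2\bigl(\mathbb E_{x,\varepsilon}|f(x+\varepsilon)-f(x)|^p\bigr)^{1/p}$. This bound lacks both the factor $m$ and the factor $(k/n)^{1/2}$, and this is the one place where the hypothesis $m\ge\sqrt{n/k}$ enters: it gives $1\le m\sqrt{k/n}$, so
$$
2\bigl(\mathbb E_{x,\varepsilon}|f(x+\varepsilon)-f(x)|^p\bigr)^{1/p}
\le 2m\Bigl(\bigl(\tfrac kn\bigr)^{p/2}\mathbb E_{x,\varepsilon}|f(x+\varepsilon)-f(x)|^p\Bigr)^{1/p}.
$$
Since $p/\log p$ is bounded below for $p\ge2$, this contribution is absorbed. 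Combining the three pieces by the triangle inequality in $L^p$ of the joint space (uniform $S$, $x$, $\varepsilon$) yields the theorem with a universal implicit constant.

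For sharpness, we would note that the metric inequality specializes to the chaos inequality. Given a mean-zero $h$ on $\Omega_n$ that is extremal for the first-order JSZ inequality, we take $f$ on $\mathbb Z_{8m}^n$ to be a suitable extension of $h$, following Naor's argument in \cite{Naor}, so that the left side recovers $\|E_{[n]\setminus S}h\|_p$. This is the step needing care: one must verify that the extension does not distort the right-hand side by more than constants. We expect the cleaner route is to appeal directly to \cite{Naor} and \cite{JSZ} for the lower bound of order $p/\log p$, since the metric $X_p$ inequality for $L^p$ implies the linear Rademacher-chaos bound, whose constant is known to be of exact order $p/\log p$.
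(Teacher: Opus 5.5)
Your proposal is correct and follows essentially the same route as the paper. Both use the Fubini reduction to scalar $f$, the same three-term splitting around $T_{[n]\setminus S}f$ (Lemma~\ref{lem:metric-main} for the middle term, Lemma~\ref{lem:smoothing} plus translation invariance for the outer terms), absorption of the outer terms via $m\ge\sqrt{n/k}$, and an appeal to the literature for the $p/\log p$ lower bound. On that last point the paper also only cites Naor--Schechtman and Naor's Remark~5 without building an extremal extension, so your hedged sketch is no weaker than what the paper gives.
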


Taking $M=4m$ in the definition of a metric $X_p$ space, the preceding
theorem shows that $L^p$ is a metric $X_p$ space with constant of order
at most $p/\log p$, up to a universal multiplicative constant.

\begin{proof}
As explained above, it suffices to prove the scalar-valued case. Fix
$S\subseteq[n]$ with $|S|=k$. By the triangle inequality,
$$
\begin{aligned}
|f(x+4m\varepsilon_S)-f(x)|
&\le
\left|
T_{[n]\setminus S}f(x+4m\varepsilon_S)
-
T_{[n]\setminus S}f(x)
\right|
\\
&\quad+
|f(x)-T_{[n]\setminus S}f(x)|
\\
&\quad+
|f(x+4m\varepsilon_S)
-
T_{[n]\setminus S}f(x+4m\varepsilon_S)|.
\end{aligned}
$$
Taking the $L^p$ norm with respect to $(x,\varepsilon)$ and then
averaging over all $S$ with $|S|=k$, Lemma~\ref{lem:metric-main}
controls the first term, while Lemma~\ref{lem:smoothing} and
translation invariance control the last two terms. Thus
$$
\begin{aligned}
&\left(
\frac1{\binom nk}
\sum_{\substack{S\subseteq[n]\\|S|=k}}
\mathbb E_{x,\varepsilon}
|f(x+4m\varepsilon_S)-f(x)|^p
\right)^{1/p}
\\
&\qquad\lesssim
\frac{p}{\log p}\,m
\left(
\frac{k}{n}
\sum_{j=1}^n
\mathbb E_x|f(x+e_j)-f(x)|^p
+
\left(\frac{k}{n}\right)^{p/2}
\mathbb E_{x,\varepsilon}
|f(x+\varepsilon)-f(x)|^p
\right)^{1/p}
\\
&\qquad\quad+
\left(
\mathbb E_{x,\varepsilon}
|f(x+\varepsilon)-f(x)|^p
\right)^{1/p}.
\end{aligned}
$$
Since $m\ge\sqrt{n/k}$, we have
$$
\left(
\mathbb E_{x,\varepsilon}
|f(x+\varepsilon)-f(x)|^p
\right)^{1/p}
\le
m
\left(\frac{k}{n}\right)^{1/2}
\left(
\mathbb E_{x,\varepsilon}
|f(x+\varepsilon)-f(x)|^p
\right)^{1/p}.
$$
Thus the final term is absorbed into the second term on the right-hand
side. Since $p/\log p$ is bounded below by a positive universal
constant for $p\ge2$, this gives the desired scalar estimate.

It remains to pass from the scalar-valued estimate to the
$L^p$-valued one. Write $L^p=L^p(\Omega,\mu)$ and let
$$
f:\mathbb Z_{8m}^n\to L^p(\Omega,\mu).
$$
For $\omega\in\Omega$, define
$$
f_\omega(x)=f(x)(\omega),
\qquad x\in\mathbb Z_{8m}^n.
$$
Let $C_1>0$ be a universal constant for which the scalar estimate above
holds. Applying that estimate to $f_\omega$, raising both sides to the
$p$th power, and then integrating with respect to $\omega$, we obtain,
by Fubini's theorem,
$$
\begin{aligned}
&\frac1{\binom nk}
\sum_{\substack{S\subseteq[n]\\|S|=k}}
\E_{x,\varepsilon}
\|f(x+4m\varepsilon_S)-f(x)\|_{L^p}^p
\\
&\qquad\le
\left(C_1\frac{p}{\log p}\right)^p m^p
\left[
\frac{k}{n}\sum_{j=1}^n
\E_x\|f(x+e_j)-f(x)\|_{L^p}^p
+
\left(\frac{k}{n}\right)^{p/2}
\E_{x,\varepsilon}
\|f(x+\varepsilon)-f(x)\|_{L^p}^p
\right].
\end{aligned}
$$
Taking $p$th roots gives the asserted $L^p$-valued inequality.

Finally, the factor $p/\log p$ cannot be improved up to a universal
multiplicative constant. This lower bound for the metric $X_p$ constant
of $L^p$ was established in the earlier theory of metric $X_p$
inequalities; see \cite{NaorSchechtman} and \cite[Remark~5]{Naor}.
\end{proof}

\section*{Acknowledgments}

The author acknowledges the use of AI tools.

\end{document}